\newtheorem{Thm}{Theorem}
\newtheorem{Cor}[Thm]{Corollary}
\newtheorem{Def}[Thm]{Definition}
\newtheorem{Lem}[Thm]{Lemma}
\newtheorem{Rmk}[Thm]{Remark}
\numberwithin{equation}{Thm}
\newcommand{\FF}{\mathbb{F}}
\newcommand{\ZZ}{\mathbb{Z}}
\newcommand{\GL}{\operatorname{GL}}
\newcommand{\PSL}{\operatorname{PSL}}
\author{Christopher Davis}
\address{University of California, Irvine, Dept of
Mathematics, Irvine, CA 92697}
\email{davis@math.uci.edu}
\date{\today}
\author{Tommy Occhipinti}
\address{University of California, Irvine, Dept of
Mathematics, Irvine, CA 92697}
\email{tocchipi@math.uci.edu}
\title{Which finite simple groups are unit groups?}
\begin{document}

\begin{abstract}
We prove that
if $G$ is a finite simple group which is the unit group of a ring, then $G$ is isomorphic to either (a) a cyclic group of order 2; (b) a cyclic group of prime order $2^k -1$ for some $k$; or (c) a projective special linear group $\PSL_n(\FF_2)$ for some $n \geq 3$.  Moreover, these groups do (trivially) all occur as unit groups.   We deduce this classification from a more general result, which holds for groups $G$ with no non-trivial normal 2-subgroup.
\end{abstract}

\maketitle

The finite groups $G$ of odd order which occur as unit groups of rings\footnote{Our rings are assumed unital but not necessarily commutative, and ring homomorphisms send $1$ to $1$.}  were determined in \cite{Dit71}.  We will prove similar results for a more general class of groups; the description of this class of groups uses the following.    

\begin{Def}
For a finite group $G$, the \emph{$p$-core} of $G$ is the largest normal $p$-subgroup of $G$.  We denote this subgroup by $O_p(G)$.   It is the intersection of all Sylow $p$-subgroups of $G$.  
\end{Def}

We now state the main result.  The authors\footnote{The authors also thank Colin Adams, John F.\ Dillon, Dennis Eichhorn, Noam Elkies, Kiran Kedlaya, Charles Toll, and Ryan Vinroot for many useful discussions.} are most grateful to the anonymous referee for our earlier paper \cite{DO13}, who recognized that one of the results proved in that paper could be strengthened into the following. 

\begin{Thm} \label{main theorem}
Let $G$ denote a finite group such that $O_2(G) = \{1\}$ and such that $G$ is isomorphic to the unit group of a ring $R$.  Then 
\[
G \cong \GL_{n_1} (\FF_{2^{k_1}}) \times \cdots \times \GL_{n_r}(\FF_{2^{k_r}}).
\]
\end{Thm}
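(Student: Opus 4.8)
The plan is to pass from the arbitrary ring $R$ to a finite ring with the same unit group, and then read off the structure from the Artin--Wedderburn decomposition. First I would replace $R$ by the subring $S \subseteq R$ generated by its units. Every unit of $S$ is a unit of $R$, and conversely $G = R^\times \subseteq S$, so $S^\times = R^\times = G$; thus I may assume from the outset that $R$ is generated by its units. The natural map $\ZZ[G] \to R$ sending a formal $\ZZ$-combination of units to its value in $R$ is then a surjective ring homomorphism, so $R$ is a quotient of the integral group ring $\ZZ[G]$. Since $G$ is finite, $\ZZ[G]$ is finitely generated as a $\ZZ$-module, and hence so is $R$.

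The crucial observation is that the single hypothesis $O_2(G) = \{1\}$ already pins down the characteristic. The element $-1$ is a central unit of $R$ with $(-1)^2 = 1$, so $\langle -1 \rangle$ is a central, hence normal, subgroup of $G$ of order $1$ or $2$. If $2 \neq 0$ in $R$ then $\langle -1\rangle$ is a nontrivial normal $2$-subgroup, contradicting $O_2(G) = \{1\}$. Therefore $R$ has characteristic $2$, and a ring that is finitely generated over $\ZZ$ and killed by $2$ is a finite-dimensional $\FF_2$-vector space, so $R$ is \emph{finite}. I expect this reduction to finiteness to be the main point of the argument: it is precisely where the hypothesis on the $2$-core does its work, and once it is in hand the remaining steps are standard finite ring theory.

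With $R$ finite of characteristic $2$, let $J$ be its Jacobson radical. As $R$ is Artinian, $J$ is nilpotent, so each $1 + j$ with $j \in J$ is a unit and $1 + J$ is exactly the kernel of the reduction map $R^\times \to (R/J)^\times$; in particular it is a normal subgroup of $G$. Its order is $|J|$, a power of $2$, so $1 + J$ is a normal $2$-subgroup of $G$, and the hypothesis forces $1 + J = \{1\}$, that is, $J = 0$ and $R$ is semisimple. Artin--Wedderburn together with Wedderburn's theorem that finite division rings are fields then gives $R \cong \prod_{i=1}^r M_{n_i}(\FF_{q_i})$ with each $q_i$ a power of the characteristic, namely $q_i = 2^{k_i}$. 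Taking unit groups, and using that units of a product ring and of a matrix ring are computed factorwise, yields $G \cong \prod_{i=1}^r \GL_{n_i}(\FF_{2^{k_i}})$, as desired.
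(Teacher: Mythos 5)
Your proof is correct and takes essentially the same route as the paper: pass to a finite characteristic-$2$ quotient of the group ring, use $O_2(G) = \{1\}$ both to force $-1 = 1$ and to show that a normal $2$-subgroup of the form $1+J$ must be trivial, then finish with Artin--Wedderburn and Wedderburn's little theorem. The only cosmetic differences are in ordering and bookkeeping: the paper establishes characteristic $2$ first and then maps $\FF_2[G]$ into $R$ (rather than $\ZZ[G]$ followed by a finiteness argument), and it kills square-zero ideals via $(1+j)^2 = 1 + j^2 = 1$ and cites that an Artinian ring with no nilpotent ideals is semisimple, whereas you kill the Jacobson radical directly by counting $|1+J| = |J|$ as a power of $2$.
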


Before proving Theorem~\ref{main theorem}, we record the following corollary.

\begin{Cor}
The finite simple groups which occur as unit groups of rings are precisely the groups
\begin{enumerate}[label=(\alph*)]
\item $\ZZ/2\ZZ$,
\item $\ZZ/p\ZZ$ for a Mersenne prime $p = 2^{k}-1$,
\item $\PSL_n(\FF_2)$ for $n \geq 3$.
\end{enumerate}
\end{Cor}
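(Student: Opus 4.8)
The plan is to treat the two directions of the classification separately, using Theorem~\ref{main theorem} as the main input for the hard direction. For the easy direction I would exhibit each listed family as a unit group and confirm its simplicity: $\ZZ/2\ZZ \cong \ZZ^\times$ is simple of prime order $2$ (case (a)); for a Mersenne prime $p = 2^k - 1$ the cyclic group $\ZZ/p\ZZ \cong \FF_{2^k}^\times = \GL_1(\FF_{2^k})$ is simple of prime order (case (b)); and $\PSL_n(\FF_2)$ arises as the unit group $\GL_n(\FF_2)$ of the matrix ring $M_n(\FF_2)$, using that over $\FF_2$ both the determinant and the center of $\GL_n$ are trivial, so that $\GL_n(\FF_2) = \SL_n(\FF_2) = \PSL_n(\FF_2)$, which is simple precisely for $n \geq 3$ (case (c)). This direction is routine and simultaneously verifies that the list contains only simple groups.

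For the main direction, suppose $G$ is a finite simple group that is the unit group of a ring, and split on the $2$-core $O_2(G)$. If $O_2(G) \neq \{1\}$, then since $O_2(G)$ is normal and $G$ is simple we must have $O_2(G) = G$, so $G$ is a simple $2$-group; the only such group is $\ZZ/2\ZZ$, placing us in case (a). If instead $O_2(G) = \{1\}$, then Theorem~\ref{main theorem} applies and gives $G \cong \GL_{n_1}(\FF_{2^{k_1}}) \times \cdots \times \GL_{n_r}(\FF_{2^{k_r}})$. A direct product of two nontrivial groups is never simple, so after discarding any trivial factors (the only trivial one being $\GL_1(\FF_2)$) exactly one nontrivial factor can survive; thus $G \cong \GL_n(\FF_{2^k})$ for a single pair $(n,k)$, and the whole problem reduces to deciding for which $(n,k)$ this general linear group is simple.

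The heart of the argument is this last reduction, which I would carry out using standard structure theory of $\GL_n$ over a finite field. When $n = 1$ the group is the cyclic group $\FF_{2^k}^\times$ of order $2^k - 1$, simple exactly when $2^k - 1$ is a Mersenne prime, giving case (b). When $n \geq 2$ and $2^k > 2$, the determinant map $\GL_n(\FF_{2^k}) \to \FF_{2^k}^\times$ surjects onto a nontrivial group with proper nontrivial kernel $\SL_n(\FF_{2^k})$, so $G$ is not simple and this case is excluded. The remaining case is $n \geq 2$ with $k = 1$, where $\GL_n(\FF_2) = \PSL_n(\FF_2)$; invoking the classical fact that $\PSL_n(\FF_2)$ is simple for $n \geq 3$ while $\PSL_2(\FF_2) \cong S_3$ is not, we obtain case (c) for $n \geq 3$ and rule out $n = 2$.

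I expect the genuine content to lie entirely in Theorem~\ref{main theorem}; everything downstream is elementary. The only mild obstacle is careful bookkeeping: keeping track of trivial factors in the product, assembling the standard facts (triviality of determinant and center over $\FF_2$, non-simplicity of a nontrivial direct product, and the exact range in which $\PSL_n(\FF_2)$ is simple), and checking that the three resulting families are exactly (a), (b), (c) with no overlaps or omissions — in particular that $\ZZ/2\ZZ$ arises only from the $2$-core case and never as any $\GL_n(\FF_{2^k})$.
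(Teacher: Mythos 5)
Your proposal is correct and follows essentially the same route as the paper: split on whether $O_2(G)$ is trivial or all of $G$, invoke Theorem~\ref{main theorem} in the trivial case, discard extra factors in the product, and then classify which $\GL_n(\FF_{2^k})$ are simple. The only differences are cosmetic and in fact slightly more thorough than the paper's own write-up --- you exhibit $\SL_n(\FF_{2^k})$ as the obstructing normal subgroup where the paper uses the scalar matrices, and you explicitly rule out $\PSL_2(\FF_2) \cong S_3$ and track the trivial factor $\GL_1(\FF_2)$, points the paper leaves implicit.
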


\begin{proof}
If $G$ is a finite simple group, then either $O_2(G) = \{1\}$ or $O_2(G) = G$.  If $O_2(G) = G$, then $G$ is a $2$-group, and because we are assuming $G$ is simple, we must have $G \cong \ZZ/2\ZZ$, which for instance is isomorphic to the unit group of $\ZZ$.  

Hence assume $G$ is a finite simple group which is isomorphic to the unit group of a ring and further assume $O_2(G) = \{1\}$.   
By Theorem~\ref{main theorem}, we know
\[
G \cong \GL_{n_1}(\FF_{2^{k_1}}) \times \cdots \times \GL_{n_r}(\FF_{2^{k_r}}).
\]
Clearly these groups all occur as unit groups, so we are reduced to determining which of them are simple;  this forces
\[
G \cong \GL_n(\FF_{2^k}).
\]
If $n > 1$ and $k > 1$, then the subgroup of invertible scalar matrices forms a nontrivial normal subgroup.  Hence two possibilities remain.  If $n = 1$, then $\GL_1(\FF_{2^k})$ is cyclic of order $2^k-1$.  If $k =1$, then $\GL_n(\FF_2) = \PSL_n(\FF_2)$.  This completes the proof.  
\end{proof}

\begin{Rmk}
The simple groups $A_8$ and $\PSL_2(\FF_7)$ also occur as unit groups.  This follows immediately from the exceptional isomorphisms
\[
A_8 \cong \PSL_4(\FF_2) \text{ and } \PSL_2(\FF_7) \cong \PSL_3(\FF_2).
\]
See for instance \cite[Section~3.12]{Wil09}.
\end{Rmk}

Having recorded the above consequences of the main result, we now gather the preliminary results used in its proof.  We begin with the following easy exercise.  

\begin{Lem} \label{trivial 2-core implies characteristic 2}
Let $G$ denote a finite group with $O_2(G) = \{1\}$, and let $R$ denote a ring with $R^{\times} \cong G$.  Then $R$ has characteristic 2.
\end{Lem}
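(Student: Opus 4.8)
The plan is to study the single element $-1 \in R$ and to show that the hypothesis $O_2(G) = \{1\}$ forces $-1 = 1$, which is exactly the assertion that $R$ has characteristic $2$. Since $\mathrm{char}(R) = 2$ is equivalent to $1 + 1 = 0$, i.e.\ to $-1 = 1$ in $R$, everything comes down to ruling out the possibility that $-1$ is a genuinely distinct element.

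First I would observe that $-1$ is always a unit of $R$, since it is its own inverse ($(-1)(-1) = 1$); thus $-1 \in R^{\times}$. Next I would note that $-1$ is \emph{central} in $R^{\times}$: for any unit $u$ we have $u(-1) = -u = (-1)u$. In particular the subgroup $\langle -1 \rangle$ it generates is normal in $R^{\times}$, and because $(-1)^2 = 1$ this subgroup has order $1$ or $2$. Hence $\langle -1 \rangle$ is a normal $2$-subgroup of $R^{\times}$, so $\langle -1 \rangle \subseteq O_2(R^{\times})$.

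Now I would transport the hypothesis across the isomorphism $R^{\times} \cong G$. Since the $2$-core is preserved under group isomorphism, $O_2(R^{\times}) \cong O_2(G) = \{1\}$, so $O_2(R^{\times})$ is trivial. Combined with $\langle -1 \rangle \subseteq O_2(R^{\times})$, this forces $\langle -1 \rangle = \{1\}$, i.e.\ $-1 = 1$ in $R$, and therefore $\mathrm{char}(R) = 2$.

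As the paper anticipates, there is essentially no obstacle here: the entire content is the observation that $-1$ is a central element of order dividing $2$, and the only point requiring a moment's care is the passage from ``central of order $\le 2$'' to ``contained in the $2$-core,'' which uses that centrality gives normality and that $O_2$ is the largest normal $2$-subgroup. Once $-1 \ne 1$ is excluded, the characteristic claim is immediate.
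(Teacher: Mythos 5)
Your proposal is correct and follows essentially the same route as the paper: both arguments observe that $-1$ is a central unit of order dividing $2$, and that the hypothesis $O_2(G)=\{1\}$ rules out any central involution, forcing $-1=1$. The only difference is that you spell out explicitly the step the paper leaves implicit, namely that a central element of order $2$ generates a normal $2$-subgroup and hence lies in the (trivial) $2$-core.
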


\begin{proof}
The elements $1$ and $-1$ are units in $R$ and are in the center of $R$, hence are in the center of $R^{\times}$.  By the assumption $O_2(G) = \{1\}$, the center of $G$ cannot contain any elements of order~2.  Hence $1 = -1$.
\end{proof}

\begin{Lem} \label{ideal exists}
Keep notation as in Lemma~\ref{trivial 2-core implies characteristic 2}, and fix an isomorphism $R^{\times} \cong G$.  
The image of the natural map
\[\varphi: \FF_2[G] \rightarrow R\]
is a ring with unit group isomorphic to $G$.  
\end{Lem}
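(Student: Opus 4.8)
The plan is to show that the image $S := \varphi(\FF_2[G])$ is a subring of $R$ whose unit group coincides, as a subset of $R$, with $R^\times$ itself; since $R^\times \cong G$ by hypothesis, this yields $S^\times \cong G$ at once. I would begin by checking that $\varphi$ is a well-defined ring homomorphism. By Lemma~\ref{trivial 2-core implies characteristic 2} the ring $R$ has characteristic $2$, so its prime subring is $\FF_2$ and $R$ is naturally an $\FF_2$-algebra; this is precisely what is needed for the group-algebra map to make sense. Writing $u_g \in R^\times$ for the unit corresponding to $g \in G$ under the fixed isomorphism, $\varphi$ sends $\sum_g a_g[g] \mapsto \sum_g a_g u_g$ with $a_g \in \FF_2$. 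Because $u_{gh} = u_g u_h$ and $u_e = 1$, this respects multiplication and sends $1$ to $1$, so $\varphi$ is a ring homomorphism and $S = \varphi(\FF_2[G])$ is a subring of $R$ containing $1$.

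The decisive observation is that $S$ already contains every unit of $R$. Indeed, each $u_g = \varphi([g])$ lies in $S$, and since the isomorphism $R^\times \cong G$ is a bijection, the elements $\{u_g : g \in G\}$ are exactly all of $R^\times$; hence $R^\times \subseteq S$. It then remains to prove the set-theoretic equality $S^\times = R^\times$. The inclusion $S^\times \subseteq R^\times$ is automatic, since an inverse computed inside the subring $S$ is in particular an inverse inside $R$. For the reverse inclusion, given $u \in R^\times$ I would use the group structure directly: writing $u = u_g$, its inverse is $u_g^{-1} = u_{g^{-1}}$, which is again one of the generators and so lies in $S$. Thus $u$ is invertible already within $S$, giving $R^\times \subseteq S^\times$. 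Combining the two inclusions yields $S^\times = R^\times \cong G$.

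The step requiring the most care is the reverse inclusion $R^\times \subseteq S^\times$. It is \emph{not} enough to know that $S$ contains all the units of $R$ as elements; one must verify that the inverse of each such unit also lands in $S$, for otherwise these elements need not be units of the smaller ring $S$. This is exactly the point at which the group-algebra structure is essential, via the identity $u_g^{-1} = u_{g^{-1}}$, and it is the crux that makes the whole argument go through. Everything else is a routine check that $\varphi$ is a homomorphism and that images of ring homomorphisms are subrings.
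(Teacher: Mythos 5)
Your proposal is correct and follows essentially the same route as the paper: both arguments rest on the two inclusions $S^\times \subseteq R^\times$ (automatic for a subring) and $R^\times \subseteq S^\times$ (because each unit $u_g$ has its inverse $u_{g^{-1}} = \varphi([g^{-1}])$ already in $S$), the latter being exactly what makes the paper's induced map $G \to S^\times$ well-defined and the composite $G \to S^\times \to R^\times$ surjective. Your write-up simply makes explicit the point the paper leaves as ``clear,'' namely that containing the units of $R$ as elements is not enough and one must check invertibility inside $S$.
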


\begin{proof}
Write $S$ for the image of $\varphi$.  It is clear on one hand that $S^{\times} \subseteq R^{\times} \cong G$, and on the other hand it is clear that the induced map $\varphi: G \rightarrow S^{\times} \rightarrow R^{\times}$ is surjective.  
\end{proof}

\begin{Lem} \label{square zero ideal}
Let $R$ denote a finite ring of characteristic~2.  If $J \subseteq R$ is a two-sided ideal such that $J^2 = 0$, then $1 + J$ is a normal elementary abelian $2$-subgroup of $R^{\times}$.
\end{Lem}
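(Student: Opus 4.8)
The plan is to verify directly that the set $1 + J = \{1 + x : x \in J\}$ satisfies all of the claimed properties, using the two hypotheses in tandem: the condition $J^2 = 0$ forces every product of two elements of $J$ to vanish, while characteristic $2$ forces $2x = 0$ for all $x$. The single computation underlying everything is the expansion
\[
(1+x)(1+y) = 1 + x + y + xy \quad (x, y \in J),
\]
which collapses to $1 + (x+y)$ since $xy \in J^2 = 0$.

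First I would establish that $1+J$ is a subgroup of $R^{\times}$. Closure is immediate from the expansion above, since $x + y \in J$ because $J$ is closed under addition. To see that each $1+x$ is actually a unit (and to pin down its inverse), I note that in characteristic $2$ we have $-x = x$, so
\[
(1+x)(1+x) = 1 + 2x + x^2 = 1 + 0 + 0 = 1;
\]
thus each $1+x$ is its own inverse and in particular lies in $R^{\times}$. The same expansion, being symmetric in $x$ and $y$, shows $(1+x)(1+y) = 1 + (x+y) = (1+y)(1+x)$, so the group is abelian. Combined with the fact that every element squares to the identity, this is exactly the statement that $1+J$ is elementary abelian.

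Finally, for normality I would conjugate: given $u \in R^{\times}$ and $x \in J$, I compute
\[
u(1+x)u^{-1} = u u^{-1} + u x u^{-1} = 1 + u x u^{-1},
\]
and observe that $u x u^{-1} \in J$ precisely because $J$ is a \emph{two-sided} ideal (so $ux \in J$ and then $(ux)u^{-1} \in J$). Hence every conjugate of an element of $1+J$ again lies in $1+J$.

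I do not expect any genuine obstacle here; this is essentially the promised easy exercise. The only point requiring care is invoking both hypotheses in exactly the right places: $J^2 = 0$ is needed to kill both the cross term $xy$ (for closure) and the quadratic term $x^2$ (for order $2$), while characteristic $2$ is needed to kill the linear term $2x$. Dropping either hypothesis breaks the argument, so the write-up should make clear where each is used.
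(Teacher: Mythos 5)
Your proof is correct and follows the same direct-computation approach as the paper: show each element $1+j$ squares to $1$ (using characteristic $2$ and $J^2=0$) and that conjugation by units preserves $1+J$ (using that $J$ is a two-sided ideal). You simply spell out the closure and commutativity checks that the paper, calling this an easy exercise, leaves implicit.
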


\begin{proof}
Note that for any $j \in J$ and $r \in R^{\times}$, we have $(1 + j)^2 = 1 + j^2 = 1$ and 
\[r(1+j)r^{-1} = 1 + rjr^{-1} \in 1 + J.\]
\end{proof}

We now use these preliminary results to prove our main theorem. 

\begin{proof}[Proof of Theorem~\ref{main theorem}]
By Lemma~\ref{ideal exists}, we may assume $R$ is a finite ring (and is in particular artinian) and has characteristic~2.  Let $J$ denote a two-sided ideal of $R$ such that $J^2 = 0$.  By Lemma~\ref{square zero ideal}, the set $1 + J$ is a normal $2$-subgroup of $R^{\times}$, and so by the assumption $O_2(G) = \{1\}$, we have $J = \{0\}$.   Thus the ring $R$ has no non-zero two-sided ideals $J$ with $J^2 = 0$, and hence $R$ has no non-zero two-sided nilpotent ideals.  By \cite[Theorem~5.4.5]{Coh89}, the artinian ring $R$ is semisimple.    By Wedderburn's Theorem \cite[Theorem~5.3.4]{Coh89}, we have
\[
R \cong M_{n_1}(D_1) \times \cdots \times M_{n_r}(D_r)
\]
for some $n_1, \ldots, n_r \geq 1$ and some division algebras $D_1, \ldots, D_r$.  Our ring $R$ is finite and hence each $D_i$ is finite. By another theorem of Wedderburn \cite[Theorem~3.8.6]{Coh89}, we have that each $D_i$ is a finite field.  Finally, because the ring $R$ has characteristic~2, each field $D_i$ has characteristic~2.  This completes the proof.
\end{proof}

\bibliography{units}
\bibliographystyle{plain}

\end{document}